\theoremstyle{plain}
\newtheorem{thm}{Theorem}[section]
\newtheorem{cor}[thm]{Corollary}
\newtheorem{que}[thm]{Question}
\theoremstyle{definition}
\newtheorem{const}[thm]{Construction}
\newtheorem{remark}[thm]{Remark}
\begin{document}

\title[Homogeneity Groups of Ends of $3$-manifolds]
{Homogeneity Groups of Ends of Open $3$-Manifolds}
\author{Dennis J. Garity}
\address{Mathematics Department, Oregon State University,
Corvallis, OR 97331, U.S.A.}
\email{garity@math.oregonstate.edu}
\urladdr{http://www.math.oregonstate.edu/$\sim$garity}

\author{Du\v{s}an Repov\v{s}}
\address{Faculty of Education,
and Faculty of Mathematics and Physics,
University of Ljubljana,  P.O.Box 2964,
Ljubljana, Slovenia}
\email{dusan.repovs@guest.arnes.si}
\urladdr{http://www.fmf.uni-lj.si/$\sim$repovs}

\date{\today}

\subjclass[2010]{Primary 54E45, 57M30, 57N12; Secondary 57N10, 54F65}

\keywords{Open 3-manifold, rigidity,   manifold end, geometric index, 
Cantor set, homogeneity group, abelian group, defining sequence}

\begin{abstract} 
For every finitely generated abelian group $G$, we construct an
irreducible open $3$-manifold $M_{G}$ whose end  set is
homeomorphic to a Cantor set  and with end homogeneity group of
$M_{G}$ isomorphic to  $G$. The end homogeneity group is the
group of self-homeomorphisms of the end set that extend to
homeomorphisms of the $3$-manifold. The techniques involve
computing the embedding homogeneity groups of carefully
constructed  Antoine type Cantor sets made up of rigid pieces. In
addition, a generalization of an Antoine Cantor set using
infinite chains is needed to construct an example with  integer
homogeneity group. Results about local genus of points in Cantor
sets and about geometric index are also used.
\end{abstract}
\maketitle


\section{Introduction}
\label{introsection}

Each Cantor set $C$ in $S^{3}$ has complement an open
$3$-manifold $M^{3}$  with end set $C$. Properties of the
embedding of the Cantor set give rise to properties of the
corresponding complementary $3$-manifold $M^{3}$. See
\cite{SoSt12}, \cite{GaRe12}, and \cite{GaReWr12} for examples of
this.

We investigate possible group actions on the end set $C$ of the
open $3$-manifold $M^{3}$ in the following sense:
The
\emph{homogeneity group of the end set }is the group of
homeomorphisms of the end set $C$ that extend to homeomorphisms
of the open $3$-manifold $M^{3}$. Referring specifically to the
embedding of the Cantor set, this group can also be called the
\emph{embedding homogeneity group of the Cantor set}. See 
\cite{Di11} and \cite{vM11} for a discussion and overview of some
other types of homogeneity.

The standardly embedded Cantor set is at one extreme here. The
embedding homogeneity group is the full group of 
self-homeomorphisms of the Cantor set, an extremely rich group (there
is such a homeomorphism taking any countable dense set to any
other). Cantor sets with this full embedding homogeneity group
are called \emph{strongly homogeneously embedded.} See Daverman
\cite{Da79} for an example of a non-standard Cantor set with this
property.

At the other extreme are \emph{rigidly embedded} Cantor sets,
i.e. those Cantor sets for which only the identity homeomorphism
extends. Shilepsky \cite{Shi74} constructed Antoine type
\cite{An20} rigid Cantor sets. Their rigidity is a consequence of
Sher's result \cite{Sh68} that if two Antoine Cantor sets are
equivalently embedded, then the stages of their defining
sequences must match up exactly. In the last decade, new examples
\cite{GaReZe06, GaReWrZe11} of non-standard Cantor sets were
constructed  that were both rigidly embedded and had simply
connected complement. See \cite{Wr86} for  additional examples of rigidity.

These examples naturally lead to the question of which types of
groups can arise as end homogeneity groups between the two
extremes mentioned above. In this paper we show that for each
finitely generated abelian group $G$, there is an irreducible
open three-manifold with the end set homeomorphic to a Cantor set
and with the end homogeneity group isomorphic to $G$.  (See
Corollary \ref{maincorollary}.)

The Cantor sets produced are \emph{unsplittable}, 
in the sense that for each such $C$, no $2$-sphere in the complement of $C$
separates points of $C$. We produce these examples  by
constructing,  for each natural number $m$ greater than one, $3$-manifolds with end homogeneity
groups $\mathbb{Z}_m$,  and by separately constructing $3$-manifolds with  end homogeneity group $\mathbb{Z}$. We then link the
Cantor sets needed for a given abelian group in an unsplittable
manner.

In Section \ref{backgroundsection}, we give definitions and the
basic results needed for the rest of the paper. In the following
section, Section \ref{Antoinesection}, we review the needed
results about Antoine Cantor sets. In Section \ref{Zpsection} we
produce Cantor sets with embedding homogeneity group
$\mathbb{Z}_{m}.$  In Section \ref{Zsection} we produce Cantor
sets with embedding homogeneity group $\mathbb{Z}$. Section
\ref{mainsection} ties together the previous results and lists
and proves the main theorems. Section \ref{questionsection} lists
some remaining questions.

\section{Preliminaries}
\label{backgroundsection}

\subsection{Background information}
Refer to \cite{GaReZe05, GaReZe06, GaReWr12} for a discussion of
Cantor sets in general and of rigid Cantor sets, and to
\cite{Ze05} for results about local genus of points in Cantor
sets and defining sequences for Cantor sets.  The bibliographies
in these papers contain additional references to results about
Cantor sets. Two Cantor sets $X$ and $Y$ in $S^3$ are  said to be
\emph{equivalent} if there is a self-homeomorphism of $S^3$ 
taking $X$ to $Y$. If there is no such homeomorphism, the Cantor
sets are said to be \emph{inequivalent}, or \emph{inequivalently
embedded}. A Cantor set $C$ is \emph{rigidly embedded} in $S^{3}$
if the only self-homeomorphism of $C$ that extends to a
homeomorphism of $S^{3}$ is the identity. 

\subsection{Geometric index}

We list the results we need on geometric index. See Schubert
\cite{Sc53} and \cite{GaReWrZe11}  for more details.

If $K$ is a link in the interior of a solid torus $T$,  the \emph
{geometric index} of $K$ in $T$ is denoted by $\text{N}(K,T)$. 
This \emph{geometric index} is the minimum  of $|K \cap D|$ over
all meridional disks  $D$ of $T$ intersecting $K$ transversely.   
 If $T$ is a solid torus and
$M$ is a finite union of disjoint solid tori so that $M \subset
\text{Int}(T)$, then the geometric index $\text{N}( M,T)$ of
$M$ in  $T$ is $\text{N}(K,T)$ where $K$ is a core of $M$.

\begin{thm}\label{indexone}  {\rm(}\cite{Sc53}, \cite[Theorem
3.1]{GaReWrZe11}{\rm)} Let $T_0$ and $T_1$ be unknotted solid
tori in $S^{3}$ with  $T_0 \subset \rm{Int}  (T_1)$ and $\rm{N}(
T_0, T_1) = 1$.  Then $\partial T_0$ and  $\partial T_1$ are
parallel; i.e., the manifold $T_1 -  \rm{Int}  (T_0)$ is
homeomorphic to $\partial T_0 \times I$ where $I$ is the closed
unit interval $[0,1]$. \end{thm}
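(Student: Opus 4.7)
The strategy is to prove that, under the hypotheses, the pair $(T_1, T_0)$ is ambient isotopic in $S^3$ to the standard concentric pair of solid tori, from which the region between is manifestly a product $\partial T_0 \times I$.

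First I would clean up the picture. The condition $\mathrm{N}(T_0, T_1) = 1$ provides a meridional disk $D$ of $T_1$ meeting the core $K$ of $T_0$ transversely in exactly one point; after a small isotopy making $D$ transverse to $\partial T_0$, the intersection $D \cap \partial T_0$ is a disjoint union of circles. Innermost-disk surgeries on $\partial T_0$ remove any circle that is inessential there: if $c$ is an innermost such circle on $D$, bounding disks $E \subset D$ and $E' \subset \partial T_0$, then replacing $E$ by a small push-off of $E'$ strictly drops the number of intersection circles. The remaining circles are essential on the torus $\partial T_0$, hence mutually parallel, and the algebraic intersection count with $K$ forces them to be meridians of $T_0$. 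Using that $T_0$ is unknotted, any extra parallel meridian circle can be eliminated by isotoping $T_0$ across the annular subdisk of $D$ it bounds together with a parallel strip on $\partial T_0$. After these reductions I may assume $D \cap T_0 = D_0$ is a single meridional disk of $T_0$.

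Next I would straighten everything. Since $T_1$ is unknotted, the cut-open manifold $B_1 := \overline{T_1 \setminus D}$ is a $3$-ball, which I identify with $D^2 \times [0,1]$ so that the two copies of $D$ correspond to $D^2 \times \{0\}$ and $D^2 \times \{1\}$. Under this identification the core $K$ becomes a properly embedded arc $K' \subset B_1$ joining $(q,0)$ to $(q,1)$ for some $q \in D^2$. The assumption that $T_0$ is unknotted in $S^3$ translates into $K'$ being an unknotted arc in $B_1$, and therefore ambient isotopic, rel its endpoints, to the straight arc $\{q\} \times [0,1]$. Descending through the identification $(x,0) \sim (x,1)$, this isotopy carries $K$ to the standard core $\{q\} \times S^1$ of $T_1 = D^2 \times S^1$. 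Uniqueness of regular neighborhoods then allows me to isotope $T_0 = N(K)$ to the standard sub-solid torus $D^2_{\varepsilon} \times S^1$ for small $\varepsilon > 0$, so that
\[
\overline{T_1 \setminus T_0} \;\cong\; \overline{(D^2 \setminus D^2_{\varepsilon})} \times S^1 \;\cong\; (S^1 \times [0,1]) \times S^1 \;\cong\; \partial T_0 \times [0,1],
\]
which is the desired parallelism.

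The principal obstacle is the first step, the reduction to a single meridional disk $D_0$: the innermost-disk moves dispose of inessential circles routinely, but eliminating the remaining essential meridians of $T_0$ genuinely uses both unknottedness hypotheses, since the isotopy across the annular region must take place inside $T_1$ in a controlled, standard way. A secondary subtle point is verifying that $T_0$ being unknotted in $S^3$ really forces the arc $K' \subset B_1$ to be unknotted, which is exactly what legitimizes the straightening step.
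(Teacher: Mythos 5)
The paper does not prove this statement; it is quoted as a known result of Schubert, with \cite{Sc53} and \cite[Theorem 3.1]{GaReWrZe11} cited, so there is no in-paper proof to compare against. Judged on its own, your argument is essentially the classical one and is correct in outline: both unknottedness hypotheses are used exactly where they must be (unknottedness of $T_1$ to identify the knot type of the core $K$ of $T_0$ with the knot type of the arc $K'$ obtained by cutting along $D$, and unknottedness of $T_0$ to conclude that $K'$ is a trivial ball pair), and the final appeal to uniqueness of regular neighborhoods is legitimate. Two remarks. First, your Step 1 is essentially dispensable: your Step 2 uses only that the meridional disk $D$ meets the core $K$ transversely in one point, which is given by $\mathrm{N}(T_0,T_1)=1$; you never actually need $D\cap T_0$ to be a single meridional disk, so the reduction you call ``the principal obstacle'' (whose last move, sliding $T_0$ across annular pieces of $D$, is the least convincing part of the write-up) can simply be omitted. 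Second, the two subtleties you flag are real but standard: the passage from ``$K$ is unknotted in $S^3$'' to ``$K'$ is an unknotted arc in $B_1$'' rests on the connected-sum decomposition of $K$ along the sphere $\partial B_1$ together with the fact that the arc on the other side caps off to the core of the unknotted $T_1$, and on the classification of $(3,1)$ ball pairs; and the straightening isotopy of $K'$ must be taken rel $\partial B_1$ (not merely rel the endpoints of $K'$) so that it descends to $T_1$ after regluing. With those points filled in, the proof is complete.
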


\begin{thm}\label{productindex}  {\rm(}\cite{Sc53}, \cite[Theorem
3.2]{GaReWrZe11}{\rm)} Let $T_0$ be a finite union of disjoint
solid tori. Let $T_1$ and $T_2$ be solid tori so that $T_0
\subset \rm{Int} ( T_1)$ and $T_1 \subset \rm{Int}  (T_2)$.  Then
$\rm{N}(T_0, T_2) =  \rm{N}(T_0, T_1) \cdot  \rm{N}(T_1, T_2)$.
\end{thm}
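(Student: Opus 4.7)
The plan is to prove the two inequalities $N(T_0,T_2) \geq N(T_0,T_1)\cdot N(T_1,T_2)$ and $N(T_0,T_2) \leq N(T_0,T_1)\cdot N(T_1,T_2)$ separately and then combine them. Let $K$ denote a core link of $T_0$ and $L$ a core circle of $T_1$.

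For the lower bound, I would take a meridional disk $D$ of $T_2$ in general position with $K$ and $\partial T_1$ realizing $|D\cap K|=N(T_0,T_2)$. By standard innermost-disk surgery along trivial circles of $D\cap\partial T_1$ (each such surgery replaces an innermost subdisk of $D$ bounded by a trivial circle with a parallel copy of a disk on $\partial T_1$, and cannot increase $|D\cap K|$), I may assume $D\cap\partial T_1$ consists of $m$ parallel meridian circles of $T_1$. Then $D\cap T_1$ is a disjoint union of $m$ meridional disks $E_1,\ldots,E_m$ of $T_1$. Since each $E_i$ meets $L$ in exactly one point, $m=|D\cap L|\geq N(T_1,T_2)$, and since each $E_i$ is meridional in $T_1$, $|E_i\cap K|\geq N(T_0,T_1)$. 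Summing yields
\[
N(T_0,T_2)=\sum_{i=1}^m |E_i\cap K|\geq m\cdot N(T_0,T_1)\geq N(T_1,T_2)\cdot N(T_0,T_1).
\]

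For the upper bound, I would construct an explicit meridional disk of $T_2$ realizing the product. Fix a meridional disk $D_1$ of $T_1$ with $|D_1\cap K|=N(T_0,T_1)$, and choose a product structure $T_1\cong S^1\times D^2$ with $D_1=\{0\}\times D^2$ in which $K$ is vertical on a neighborhood $(-\epsilon,\epsilon)\times D^2$ of $D_1$; this is possible because $K$ meets $D_1$ transversely and $D_1$ is non-separating. Then for any $0<t_1<\cdots<t_n<\epsilon$ with $n=N(T_1,T_2)$, the slices $F_j=\{t_j\}\times D^2$ are $n$ disjoint meridional disks of $T_1$ with $|F_j\cap K|=N(T_0,T_1)$. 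Next, take a meridional disk $D_2'$ of $T_2$ realizing $N(T_1,T_2)=n$, so that $D_2'\cap(T_2\setminus\mathrm{Int}(T_1))$ is a planar surface $\Sigma$ whose boundary consists of a meridian of $T_2$ on $\partial T_2$ together with $n$ parallel meridians of $T_1$ on $\partial T_1$. Because any two configurations of $n$ parallel meridians on the torus $\partial T_1$ are ambient-isotopic on $\partial T_1$, I would apply an ambient isotopy of $T_2\setminus\mathrm{Int}(T_1)$ supported in a collar of $\partial T_1$ (which leaves $K\subset\mathrm{Int}(T_1)$ and $\partial T_2$ untouched) to slide the inner boundary of $\Sigma$ onto $\partial F_1\cup\cdots\cup\partial F_n$. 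Capping off the isotoped $\Sigma$ by the $F_j$'s produces an embedded meridional disk $D$ of $T_2$ with $|D\cap K|=n\cdot N(T_0,T_1)$, so $N(T_0,T_2)\leq N(T_0,T_1)\cdot N(T_1,T_2)$.

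The main technical obstacle will be the upper bound, specifically justifying the choice of product structure in which $K$ is vertical near $D_1$ and the boundary-realigning isotopy for $\Sigma$ that fixes $K$ and the outer boundary on $\partial T_2$. Once these standard but careful 3-manifold arguments are in place, combining the two inequalities yields $N(T_0,T_2)=N(T_0,T_1)\cdot N(T_1,T_2)$.
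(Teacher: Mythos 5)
First, note that the paper itself does not prove this statement; it quotes it from Schubert \cite{Sc53} and from \cite[Theorem 3.2]{GaReWrZe11}, so your argument has to stand on its own. Your overall strategy (two inequalities, obtained by analyzing how a meridional disk of $T_2$ meets $\partial T_1$) is the right one and matches how the result is proved in those sources, but both halves of your write-up silently assume the statement that is the actual technical content of the theorem: that the relevant meridional disk of $T_2$ can be arranged to meet $T_1$ in a disjoint union of meridional disks of $T_1$.

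In the lower bound, after removing the circles of $D\cap\partial T_1$ that are inessential on $\partial T_1$, you conclude that ``$D\cap T_1$ is a disjoint union of $m$ meridional disks.'' What actually follows is only that the remaining circles are essential and mutually parallel on $\partial T_1$, and (by an innermost-disk argument, in the nondegenerate case) that they are meridians; a component of $D\cap T_1$ may a priori still be a planar surface with two or more boundary circles --- for instance a boundary-parallel annulus between two parallel meridians, which can miss $K$ entirely --- and for such a component neither the estimate $|E_i\cap K|\ge N(T_0,T_1)$ nor the count $m\ge N(T_1,T_2)$ applies. Eliminating these components by surgeries or isotopies that provably do not increase $|D\cap K|$ is the heart of Schubert's proof and is missing here. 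You also need to dispose of the degenerate possibilities: $D\cap\partial T_1=\emptyset$, and an innermost subdisk of $D$ lying outside $T_1$ with essential boundary (which forces $T_1$ into a ball in $T_2$, so both sides of the identity vanish). The same gap reappears in the upper bound: you take a minimizing disk $D_2'$ for $N(T_1,T_2)$ and assert that $\Sigma=D_2'\cap(T_2\setminus\mathrm{Int}(T_1))$ is a planar surface whose inner boundary is exactly $n$ parallel meridians of $T_1$, i.e.\ that $D_2'\cap T_1$ consists of exactly $n$ meridional disks; this is again the unproved key lemma, not a consequence of $|D_2'\cap L|=n$. By contrast, the steps you flag as the main obstacles --- the vertical product structure near $D_1$ and the collar isotopy realigning the $n$ meridians on $\partial T_1$ --- are indeed routine. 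A smaller point: a meridional disk $E_i$ of $T_1$ need not meet a fixed core $L$ exactly once; to get $m\ge N(T_1,T_2)$ you should instead choose a core meeting each $E_i$ once, which is possible because disjoint meridional disks cut $T_1$ into cylinders.
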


There is one additional result we will need.

\begin{thm}\label{evenindex}{\rm(}\cite{Sc53}, \cite[Theorem
3.3]{GaReWrZe11}{\rm)} Let $T$ be a solid torus in $S^{3}$ and
let $T_{1},\ldots T_{n}$ be unknotted pairwise disjoint solid tori in 
$T$, each of
geometric index $0$ in $T$. Then the geometric index of
$\bigcup\limits_{i=1}^{n}T_{i}$ in $T$ is even. \end{thm}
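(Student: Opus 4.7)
The plan is a parity argument using winding numbers. Let $K_i$ denote a core of $T_i$, so $K = \bigcup_{i=1}^{n} K_i$ is a core of $M = \bigcup_{i=1}^{n} T_i$, and $\mathrm{N}(M,T)$ is the minimum of $|K \cap D|$ as $D$ ranges over meridional disks of $T$ transverse to $K$. I would show that each individual $|K_i \cap D|$ is even; summation then forces $|K \cap D|$, and hence the minimum $\mathrm{N}(M,T)$, to be even.

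The hypothesis $\mathrm{N}(T_i, T) = 0$ yields, for each $i$, a meridional disk $D_i$ of $T$ with $K_i \cap D_i = \emptyset$. Then $K_i$ lies in the $3$-ball $T \setminus D_i$, so $K_i$ is null-homotopic in $T$; equivalently, it represents $0$ in $H_1(T;\mathbb{Z}) \cong \mathbb{Z}$, so its winding number in $T$ is zero. Consequently, for any meridional disk $D$ of $T$ transverse to $K_i$, the algebraic intersection number $K_i \cdot D$ vanishes. Since the geometric count $|K_i \cap D|$ and the algebraic count $K_i \cdot D$ of a transverse oriented loop with a transverse oriented properly embedded surface in an oriented $3$-manifold always agree modulo $2$, we conclude that $|K_i \cap D|$ is even.

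Summing over $i$, $|K \cap D| = \sum_{i=1}^{n} |K_i \cap D|$ is even for every meridional disk $D$ of $T$ transverse to $K$, and therefore the minimum $\mathrm{N}(M,T)$ is even. The approach poses no real obstacle; the only subtle point is the mod-$2$ agreement of geometric and algebraic intersection, which is a standard consequence of the sign conventions at transverse intersection points. Unlike Theorem \ref{indexone}, which requires a Dehn-lemma type step to promote index $1$ to boundary parallelism, and Theorem \ref{productindex}, which needs an analysis of how meridional disks decompose under nesting, this result is purely homological and requires no deeper input.
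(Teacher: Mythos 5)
Your argument is correct: index $0$ gives a meridional disk missing the core $K_i$, so $K_i$ is null-homologous in $T$, hence meets every transverse meridional disk algebraically $0$ times and therefore geometrically an even number of times, and summing over $i$ makes every transverse count for $\bigcup K_i$ even, so the minimum is even. The paper states this theorem only as a cited background result (Schubert; \cite[Theorem 3.3]{GaReWrZe11}) without reproducing a proof, and your homological parity argument is essentially the standard proof given in those references, so there is nothing to flag.
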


\subsection{Defining sequences and local genus}

We review the definition and some basic facts from \cite{Ze05}
about  the local genus of points in a Cantor set. See \cite{Ze05}
for a discussion of defining sequences.

Let
${ \mathcal{D}}(X)$ be the set of all defining sequences for a
Cantor set $X\subset S^3$.
Let $(M_i)\in{\mathcal{D}}(X)$ be a specific defining sequence for a $X$. For  $A\subset X$, denote by $M_i^A$
the union of those components of $M_i$ which intersect $A$. The genus of $M_i^A$, 
$g(M_i^A)$, is the maximum genus of a component of $M_i^A$. Define 
\[ 
g_A(X;(M_i)) = \sup\{g(M_i^A);\ i\geq 0\}\ \ \mbox{ and}
\] 
\[ 
g_A(X) = \inf\{ g_A(X;(M_i));\ (M_i) \in
{\mathcal{D}}(X)\}. 
\] 
The number $g_A(X)$ is called \emph{the
genus of the Cantor set $X$ with respect to the subset $A$}. For
$A=\{x\}$ we call the number $g_{\{x\}}(X)$  \emph{the local
genus of the Cantor set $X$ at the point $x$} and denote it by
$g_x(X)$.

Let $x$ be an arbitrary point of a Cantor set $X$ and $h\colon
S^3\to S^3$ a homeomorphism. Then the local genus $g_x(X)$ is the
same as the local genus $g_{h(x)}(h(X))$. Also note that if $x\in
C\subset C^{\prime}$, then the local genus of $x$ in $C$ is less
than or equal to the local genus of $x$ in $C^{\prime}$.  See
\cite[Theorem 2.4]{Ze05}.

The following result from \cite{Ze05} is needed to show that
certain points in our examples have local genus $2$.

\begin{thm}\cite{Ze05} \label{Slicing} 
Let $X,Y\subset S^3$ be
Cantor sets and $p\in X\cap Y$.  Suppose there exists a 3-ball
$B$ and a 2-disc $D\subset B $ such that

\begin{enumerate}
\item $p\in\rm{Int} (B)$, $\partial D=D\cap\partial B$, $D\cap (X\cup
Y)=\{p\}$; and

\item $X\cap B\subset B_X\cup\{p\}$ and $Y\cap B\subset
B_Y\cup\{p\}$ where $B_X$ and $B_Y$ are the components of
$B -  D$.
\end{enumerate}

Then $g_p(X\cup Y)=g_p(X)+g_p(Y)$.
\end{thm}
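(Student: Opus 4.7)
The plan is to prove the equality by establishing the two inequalities $g_p(X\cup Y)\le g_p(X)+g_p(Y)$ and $g_p(X\cup Y)\ge g_p(X)+g_p(Y)$ separately, each via defining-sequence manipulations localized near $p$.

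For the upper bound, I would choose defining sequences $(M_i)$ of $X$ and $(N_i)$ of $Y$ realizing the local genera, so that $g(M_i^p)\le g_p(X)$ and $g(N_i^p)\le g_p(Y)$ for all $i$. Since the components containing $p$ shrink to $\{p\}$, after passing to a tail we have $M_i^p,N_i^p\subset\mathrm{Int}(B)$. By hypothesis (2), the points of $X$ inside $M_i^p$ other than $p$ all lie in $B_X$, so pushing the boundary $\partial M_i^p$ across $D$ does not conflict with $X$. An innermost-circle argument applied to $\partial M_i^p\cap D$ inside the disc $D$ supplies an ambient isotopy, supported in $B\setminus X$, that places $M_i^p$ entirely in $B_X$ except for a small disc on $D$ around $p$. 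I would apply the analogous isotopy to $N_i^p$, placing it in $B_Y$ except for the same small disc at $p$. Then $M_i^p\cup N_i^p$ is a boundary-connected sum of two handlebodies along a disc, hence itself a handlebody of genus $g(M_i^p)+g(N_i^p)$. Assembling a defining sequence $(L_i)$ of $X\cup Y$ with these as its $p$-components gives $g_p(X\cup Y)\le g_p(X)+g_p(Y)$.

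For the lower bound, I would take a defining sequence $(L_i)$ of $X\cup Y$ realizing $g_p(X\cup Y)$, so that $g(L_i^p)=g_p(X\cup Y)$ for all large $i$ and $L_i^p\subset\mathrm{Int}(B)$. Put $\partial L_i^p$ transverse to $D$ and simplify $L_i^p\cap D$ by innermost-disc isotopies (to remove trivial intersection circles) together with compressions along compressing discs in $D$ (to remove essential ones), until $L_i^p\cap D$ is a single properly embedded disc $D_0\subset L_i^p$ with $p\in\mathrm{Int}(D_0)$. Since $D\cap(X\cup Y)=\{p\}$, these moves can be performed without disturbing $X\cup Y$. Cutting $L_i^p$ along the separating disc $D_0$ produces two handlebodies $H_X^i\subset B_X\cup D_0$ and $H_Y^i\subset B_Y\cup D_0$ satisfying $g(H_X^i)+g(H_Y^i)=g(L_i^p)$. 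A slight collar-thickening of $H_X^i$ across $D_0$ yields a handlebody of the same genus containing $p$ in its interior that serves as a $p$-component of some defining sequence of $X$, and similarly for $Y$. Passing to a subsequence on which $g(H_X^i)=a$ and $g(H_Y^i)=b$ are constant, I obtain $g_p(X)\le a$, $g_p(Y)\le b$, and $a+b=g_p(X\cup Y)$, which gives the desired inequality.

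The main obstacle is the simplification step in the lower bound: starting from an arbitrary defining sequence, one must reduce the intersection $L_i^p\cap D$ to a single disc around $p$ without destroying the defining-sequence property. This requires alternating isotopies of $L_i^p$ with compressions of $\partial L_i^p$ along discs in $D$, and verifying at each step that the handlebody structure is preserved and that no point of $X\cup Y$ is stranded on the wrong side of $D$. By contrast, the combinatorial bookkeeping for the upper bound (nesting of the $L_i$ and $\bigcap L_i=X\cup Y$) is straightforward once the local isotopies are in place.
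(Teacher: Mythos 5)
First, note that the paper offers no proof of this statement: it is quoted from \cite{Ze05}, so there is no in-paper argument to compare against, and your proposal has to stand on its own. Your overall strategy --- proving $g_p(X\cup Y)\le g_p(X)+g_p(Y)$ by forming a boundary-connected sum of handlebody neighborhoods along a small disc of $D$ at $p$, and the reverse inequality by cutting a handlebody neighborhood of $p$ in $X\cup Y$ along $D$ --- is certainly the natural shape of a proof. The difficulty is that in both directions the decisive step is the normalization of $\partial(\cdot)\cap D$, and in both directions you assert it rather than prove it.

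Concretely: in the upper bound, circles of $\partial M_i^p\cap D$ that are essential on $\partial M_i^p$ (for instance meridians, when $M_i^p$ is a solid torus whose core crosses $D$ into $\overline{B_Y}$ and back) cannot be removed by any innermost-circle \emph{isotopy}, since they bound no disc on $\partial M_i^p$; the only available move is a compression, which replaces $M_i^p$ by a different (possibly disconnected) handlebody, after which the nesting and covering conditions of a defining sequence must be re-verified. So the claim that an ambient isotopy supported off $X$ places $M_i^p$ entirely in $B_X$ is false as stated. In the lower bound, the case you do not handle is an innermost circle of $\partial L_i^p\cap D$ bounding a disc of $D$ whose interior lies \emph{outside} $L_i^p$: ``compressing along it'' means attaching a $2$-handle to $L_i^p$, and the result need not be a handlebody, need not remain inside $L_{i-1}^p$, and may engulf points of $X\cup Y$ from other components, destroying the defining-sequence property. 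You also give no argument that the procedure terminates with $L_i^p\cap D$ equal to a single disc about $p$ rather than a planar surface with several boundary circles, and special care is needed for the component of $L_i^p\cap D$ containing $p$, which must never be cut. (A smaller point: after surgeries one should only claim $g(H_X^i)+g(H_Y^i)\le g(L_i^p)$, which is in any case all the lower bound requires.) Since you yourself identify this normalization as the main obstacle and then do not overcome it, the proposal is an outline of the right plan with the essential technical content missing; that content is exactly what the proof in \cite{Ze05} supplies.
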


\subsection{Discussion and examples of ends and homogeneity groups}
For background on Freudenthal compactifications and theory of
ends, see \cite{Di68}, \cite{Fr42}, and \cite{Si65}. For an
alternate proof using defining sequences of the result that every
homeomorphism of the open $3$-manifold extends to a homeomorphism
of its Freudenthal compactification, see \cite{GaRe12}.

At the end of the next section, we will discuss elements of the
homogeneity group of a standard self-similar Antoine Cantor set.
Note that removing $n$ points from $S^3$ yields a reducible
open $3$-manifold with end homogeneity group the symmetric group
on $n$ elements. It is not immediately obvious how to produce
examples that are irreducible, have a rich end structure (for
example a Cantor set), and at the same time have specified
abelian end homogeneity groups.

\section{Antoine Cantor set Properties}
\label{Antoinesection}

An Antoine Cantor set is described by a defining sequence $(M_i)$ as follows. Let $M_{0}$ be an
unknotted solid torus in $S^{3}$. Let $M_{1}$ be a chain of at
least four linked, pairwise disjoint, unknotted solid tori in $M_{0}$ 
as in Figure
\ref{FiniteChain}. Inductively, $M_{i}$ consists of pairwise
disjoint solid tori in $S^{3}$ and $M_{i+1}$ is obtained from
$M_{i}$ by placing a chain of at least four linked, pairwise disjoint, unknotted
solid tori in each component of $M_{i}$. If the diameter of the
components goes to $0$, the Antoine Cantor set is
$C=\bigcap\limits_{i=0}^{\infty}M_{i}$.

We refer to Sher's paper \cite{Sh68}  for basic results and
description of Antoine Cantor sets. The key result we shall need is the
following:
\begin{thm}\cite[Theorems 1 and 2]{Sh68}\label{Sher} 
Suppose $C$
and $D$ are Antoine Cantor sets in $S^{3}$ with defining
sequences $(M_{i})$ and $(N_{i})$ respectively. The Cantor
sets are equivalently embedded if and only if there is a 
self-homeomorphism $h$ of $S^{3}$ with $h(M_{i})=N_{i}$ for each $i$.
\end{thm}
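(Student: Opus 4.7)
The $(\Leftarrow)$ direction is immediate, since $h(\bigcap_i M_i) = \bigcap_i h(M_i)$ gives $h(C) = D$.

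For the $(\Rightarrow)$ direction, start with any self-homeomorphism $h_0$ of $S^3$ with $h_0(C) = D$. The plan is to construct, inductively, self-homeomorphisms $h_0, h_1, h_2, \ldots$ of $S^3$ with $h_i(M_j) = N_j$ for all $j \leq i$, and with $h_i$ differing from $h_{i-1}$ only by an ambient isotopy supported in $N_{i-1}$. Since the diameters of the components of $N_i$ tend to zero as $i \to \infty$, the supports of these corrective isotopies form a nested sequence converging to $D$; the sequence $(h_i)$ converges uniformly (as does $(h_i^{-1})$) to a self-homeomorphism $h_\infty$ of $S^3$ satisfying $h_\infty(M_i) = N_i$ for every $i$, which is exactly the conclusion.

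The inductive step is the heart of the argument. Assuming $h_{i-1}(M_j) = N_j$ for $j < i$, I fix a component $T$ of $N_{i-1}$ and set $A = h_{i-1}(M_i) \cap T$ and $B = N_i \cap T$. Both $A$ and $B$ are Antoine chains of unknotted solid tori in $T$. The first substep is to show that any chain torus in such a configuration is incompressible in the complement of the enclosed Cantor set piece: each component encloses an infinite part of the Cantor set, its two chain-neighbors link it essentially, and an unsplittability argument rules out any compressing disc. This forces $h_{i-1}|_T$ to carry chain components of $A$ to chain components of $B$ respecting chain combinatorics, and in particular shows that $A$ and $B$ have the same number of components. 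The second substep invokes the key lemma: any two chains of $k \geq 4$ pairwise disjoint, unknotted solid tori inside an unknotted solid torus $T$, linked in Antoine fashion, are ambient isotopic in $T$ relative to $\partial T$. Applied componentwise across $N_{i-1}$, this lemma produces an ambient isotopy $\phi$ of $S^3$ supported in $N_{i-1}$ with $\phi(h_{i-1}(M_i)) = N_i$, and setting $h_i := \phi \circ h_{i-1}$ completes the inductive step.

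The main obstacle is the chain-uniqueness lemma. Combinatorial uniqueness follows from the geometric-index machinery of Theorems \ref{indexone}--\ref{evenindex}: the chain has geometric index zero in $T$ by Theorem \ref{evenindex}, each component is unknotted and links its two neighbors with linking number $\pm 1$ while having trivial linking with non-neighbors. Promoting this combinatorial picture to a single ambient isotopy rel $\partial T$ requires genuine work: one first isotopes a chosen chain component into a standard position inside $T$, then exploits the controlled topology of the complement of a standardly embedded unknotted solid torus in $T$ (essentially a product region by Theorem \ref{indexone}) to move neighboring tori into their standard positions without disturbing what has already been normalized, iterating around the chain. Once this lemma is in hand, the induction runs cleanly and the limit construction described above produces a single self-homeomorphism of $S^3$ matching the two defining sequences level by level.
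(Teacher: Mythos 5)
Your overall skeleton (induction over stages, general position to disentangle boundaries, a limit of corrections supported in shrinking stages) is the same as the paper's, but the inductive step --- which is where all the content of Sher's theorem lives --- has a genuine gap. You claim that incompressibility ``forces $h_{i-1}|_T$ to carry chain components of $A$ to chain components of $B$'' and hence that $A$ and $B$ have the same number of components. But $h_{i-1}$ is not a map from $A$ to $B$: $A$ is already the image $h_{i-1}(M_i)\cap T$, and there is no homeomorphism in sight carrying $A$ to $B$; producing one is precisely what must be proved. Incompressibility only lets you make the boundaries of $A$ and $B$ disjoint, after which each component of one chain is nested inside some component of the other. The bijection between components (and hence the equality of chain lengths and matching of adjacency) comes from the geometric index computation that your write-up omits: a component $T'$ of $A$ lying in a component $S'$ of $B$ cannot have index $0$ in $S'$ (else the whole linked chain would be swallowed by $S'$, contradicting that the Cantor set meets other components), cannot lie in a next-stage torus, hence contains all next-stage tori of $S'$; Theorems \ref{productindex} and \ref{evenindex} then force $\mathrm{N}(T',S')=1$. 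Relatedly, your assertion that ``the chain has geometric index zero in $T$ by Theorem \ref{evenindex}'' is wrong: the union of the chain has geometric index $2$ in $T$ (Theorem \ref{evenindex} only gives evenness), and index $0$ would put the chain inside a ball in $T$, contradicting unsplittability.

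Once the index-$1$ matching is established, your chain-uniqueness lemma becomes unnecessary: Theorem \ref{indexone} says $\partial T'$ and $\partial S'$ are parallel, so a componentwise ambient isotopy supported in the larger of the two nested tori carries $T'$ onto $S'$, and these supports are pairwise disjoint, which is exactly how the paper completes the stage. By contrast, the route you propose --- prove that any two Antoine chains of $k$ tori in $T$ are ambient isotopic rel $\partial T$ and recognize $A$ as such a chain --- is strictly harder: you would still need the index computation to know the two chains have the same length and that $A$ sits in $T$ in standard position (e.g., that its union has index exactly $2$, not $4$ or more), and the lemma itself is left unproved, with the closing-up of the cyclic chain being the delicate point your normalization sketch does not address.
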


In particular, the number and adjacency of links in the chains
must match up at each stage. Because we need a modification of
this result for infinite chains in Section \ref{Zsection}, we
provide below an outline of an alternative proof of the forward
implication.

\begin{proof} (Forward implication of Theorem \ref{Sher}.) It
suffices to show if $C$ has two Antoine defining sequences
$(M_{i})$ and $(N_{i})$, then there is a homeomorphism $h$ as
in the theorem.

\textbf{Step 1: }There is a general position homeomorphism
$h_{1}$, fixed on $C$, so that $h_{1}(\partial (M_{1})\cup
\partial(M_{2}))$ is in general position with
$\partial(N_{1})\cup \partial( N_{2})$. The curves of
intersection of $h_{1}(\partial (M_{1})\cup \partial(M_{2})) \cap
\left(\partial(N_{1})\cup \partial( N_{2})\right)$ can be
eliminated by a homeomorphism $h_{2}$ also fixed on $C$ by a
standard argument and the facts that any nontrivial curve on
$\partial(M_{i})$ does not bound a disc in the complement of $C$,
and that no $2$-sphere separates the points of $C$. For details
on the type of argument in this step, see \cite{Sh68} or
\cite{GaReWrZe11}.

\textbf{Step 2:} Let $T$ be a component of $h_{2}\circ
h_{1}(M_{1})$ and assume $T$ intersects a component $S$ of
$N_{1}$. Either $T\subset \rm{Int} (S) $ or $S\subset \rm{Int} (T)$.
First assume $T\subset \rm{Int} (S)$. If the geometric index of
$T$ in $S$ is $0$, then since the other components of $h_{2}\circ
h_{1}(M_{1})$ are linked to $T$ by a finite chain, all components
of $h_{2}\circ h_{1}(M_{1})$ are in the interior of $S$. This is
a contradiction since there are points of $C$ not in $S$. So the
geometric index of $T$ in $S$ is greater than or equal to $1$.

Note that $T$ cannot be contained in any component of $N_{2}$
that is in $S$ since these have geometric index $0$ in $S$. So
$T$ contains all the components of $N_{2}$ that are in $S$. Each
of these components has geometric index $0$ in $T$, so the union
of these components has an even geometric index in $T$ by Theorem
\ref{evenindex}. This geometric index must then be $2$ and the
geometric index of $T$ in $S$ must be $1$. Now there is a
homeomorphism $h_{3}$ fixed on $C$ and the complement of $S$ that
takes $T$ to $S$.

If instead $S\subset \rm{Int} (T)$, a similar argument shows there
is a homeomorphism $h_{3}$ fixed on $C$ and the complement of $T$
taking $S$ to $T$. The net result is that it is possible to
construct a homeomorphism $h_{3}^{\,\prime}$ taking the
components of $h_{2}\circ h_{1}(M_{1})$ to the components of $S$.
One now proceeds inductively, matching up further stages in the
constructions, obtaining the desired homeomorphism $h$ as a limit.
\end{proof}

\begin{remark}
\label{separation}
A standard argument shows that an Antoine Cantor set cannot be
separated by a $2$-sphere. This is also true if the 
construction starts with a finite open chain of linked tori as in
Figure \ref{GeneralChain}. 
\end{remark}

\begin{remark}
\label{isotopy} Also note that the homeomorphism of Theorem
\ref{Sher} can be realized as the final stage of an isotopy since
each of the homeomorphisms in the argument can be realized by an
isotopy.
\end{remark}

\subsection{Homogeneity groups of Antoine Cantor sets}
Let $C$ be obtained by a standard Antoine construction where the
same number of tori are used in tori of previous stages in each
stage of the construction. For example, the Antoine pattern in
Figure \ref{FiniteChain} with $24$ smaller tori, each geometrically similar to
the outer torus, can be repeated in each component at each stage
of the construction. 

We now consider
some elements of the embedding homogeneity group of $C$.
There is an obvious $\mathbb{Z}_{24}$ group
action on the resulting Cantor set obtained by rotating and
twisting the large torus. There is also $\mathbb{Z}_{24}\oplus
\mathbb{Z}_{24}$ action on $C$ by considering the first two stages, 
where we require each torus in the second stage to
rotate the same amount. If we allow the tori in the second stage to
rotate different amounts, we get an even larger group action by 
$\mathbb{Z}_{24}$ wreath product with $\mathbb{Z}_{24}$. 
Considering more stages results in even more
complicated group actions.

In addition to these group actions by rotating and twisting, there are also orientation 
reversing $\mathbb{Z}_{2}$ actions that
arise from reflecting through a horizontal plane (containing the core
of the large torus) or through a vertical plane (containing meridians
of the large torus).

From this we see that even for a simple self-similar Antoine Cantor
set, the embedding homogeneity group is more complex than just the
group of obvious rotations from the linking structure. In the next
section we shall carefully combine certain Antoine constructions to
produce a more rigid example with nontrivial end homogeneity group
in such a way that these kinds of orientation reversing homeomorphisms are not possible, 
and that also restricts the possible rotations.

\section{A Cantor Set with embedding homogeneity group 
$\mathbb{Z}_{m}$} \label{Zpsection}

Fix an integer $m>1$. We describe how to construct a Cantor set
in $S^{3}$ with embedding homogeneity group $\mathbb{Z}_{m}$.

\begin{const} \label{ZpConst} As in the previous section, let
$S_{0}$ be an unknotted solid torus in $S^{3}$. Let $\{S_{(1,i)}
\, \vert\, 1\leq i \leq 4m \}$, be an Antoine chain of $4m$
pairwise disjoint linked solid tori in the interior of $S_{0}$ and let 
\[
S_{1}=\bigcup_{i=1}^{4m}S_{(1,i)} .
\]
\begin{figure}[htb]
\begin{center}
\includegraphics[width=.8\textwidth]{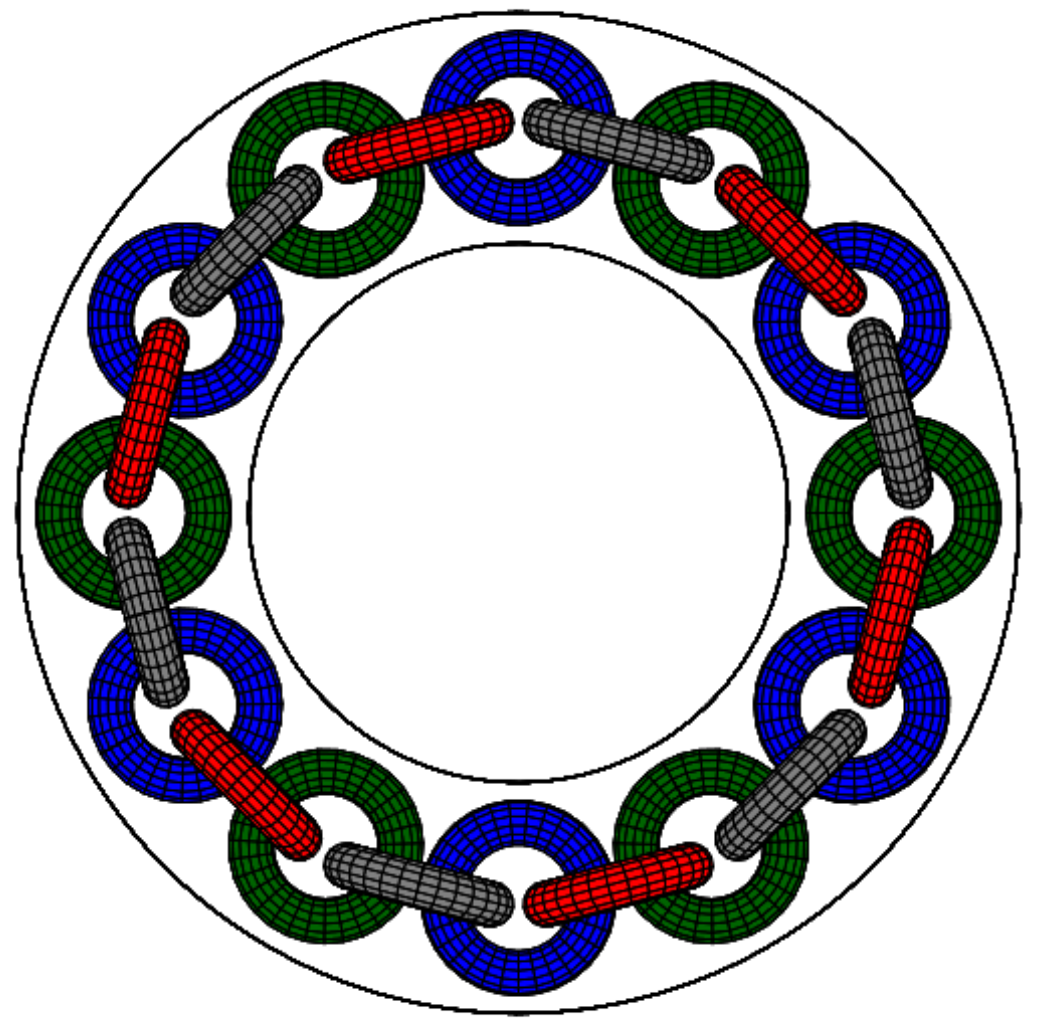}
\end{center}
\caption{ Antoine Chain With $\mathbb{Z}_{6}$ Group Action}
\label{FiniteChain}
\end{figure}

See Figure \ref{FiniteChain} for the case when $m=6$. Let
$C_{j},1\leq j \leq4,$ be a rigid Antoine Cantor set with first
stage $S_{(1,j)}$. Choose these four rigid Antoine Cantor sets so
that they are inequivalently embedded in $S^{3}$. Let $h$ be a
homeomorphism of $S^{3}$, fixed on the complement of the interior
of $S_{0}$, that takes $S_{(1,j)}$ to $S_{(1,j+4\mod 4m)}$ for
$1\leq j \leq 4m$. Require that $h^{m}$ is the identity on each
$S_{(1,i)}$.

For $4k < i \leq 4k+4$, let $C_{i}$ be the rigid Cantor set in
$S_{(1,i)}$ given by $h^{k}(C_{i-4k})$. Note that this produces
$m$ copies of each of the rigid Cantor sets $C_{1}$, $C_{2}$,
$C_{3}$, and $C_{4}$. Again, see Figure \ref{FiniteChain} where
the shading indicates the four classes of rigid Cantor sets. The
Cantor set we are looking for is
\[
C=\bigcup_{i=1}^{4m} C_{i}.
\]
\end{const}

\begin{thm} 
\label{ZpTheorem}
The Cantor set $C$ from the previous construction has embedding
homogeneity group $\mathbb{Z}_{m}$ and is unsplittable. \end{thm}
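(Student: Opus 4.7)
The plan is to establish both directions of the equality: that $h$ generates an order-$m$ subgroup of the embedding homogeneity group of $C$, and that every extendable self-homeomorphism of $C$ is the restriction of some power of $h$. The first direction follows directly from the construction. Since $h^m$ is required to be the identity on each $S_{(1,i)}$ and $h$ fixes the complement of $S_0$, the powers $h^0, h^1, \ldots, h^{m-1}$ are pairwise distinct self-homeomorphisms of $S^3$ that preserve $C$ setwise (they send $C_1$ to $C_1, C_5, C_9, \ldots, C_{4m-3}$ respectively, which are distinct components of $M_1$), so they inject $\mathbb{Z}_m$ into the embedding homogeneity group of $C$.

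For the reverse inclusion, let $f\colon C\to C$ extend to $F\colon S^3\to S^3$. Observe that $C$ carries a natural Antoine defining sequence $(M_i)$ with $M_0 = S_0$, $M_1 = S_1$, and subsequent stages assembled from the rigid Antoine defining sequences of the individual $C_j$'s. By Theorem \ref{Sher} together with Remark \ref{isotopy}, modify $F$ through a $C$-fixing isotopy so that, without changing $F|_C$, the new $F$ satisfies $F(M_i) = M_i$ componentwise for every $i$. In particular, $F$ induces a permutation $\sigma$ of the $4m$ stage-$1$ tori that preserves the linking of the chain, so $\sigma$ lies in the dihedral group $D_{4m}$ of symmetries of the cyclic chain.

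Since each $C_i$ is rigidly embedded and $C_1, C_2, C_3, C_4$ were chosen pairwise inequivalently embedded in $S^3$, the restriction $F\colon C_i \to C_{\sigma(i)}$ forces $C_i$ and $C_{\sigma(i)}$ to have the same type, that is, $\sigma(i)\equiv i\pmod 4$. A short combinatorial check rules out every dihedral reflection (the coloring $1,2,3,4,1,2,3,4,\ldots$ around the chain has a distinguished cyclic orientation that every reflection reverses) and shows that only the rotations by multiples of $4$ positions preserve the coloring; these are exactly the $m$ permutations realized by powers of $h$. Writing $\sigma$ as the rotation by $4\ell$ positions, the composite $(h^\ell)^{-1}\circ F$ fixes each $S_{(1,i)}$ setwise, and its restriction to each rigid $C_i$ is an extendable self-homeomorphism of $C_i$, hence the identity by rigidity. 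Therefore $f = h^\ell|_C$, and the embedding homogeneity group is exactly $\mathbb{Z}_m$.

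Unsplittability of $C$ follows from the standard $2$-sphere cancellation argument for Antoine chain structures applied to $(M_i)$, as recorded in Remark \ref{separation}. The most delicate step I anticipate is the initial reduction via Sher's theorem: one must verify that the mixed defining sequence built from four inequivalent rigid Antoine patterns still qualifies as an Antoine defining sequence to which Theorem \ref{Sher} applies, and that the proof of Sher's theorem (rather than only its statement) allows $F$ itself to be deformed through a $C$-fixing isotopy into a sequence-preserving map. With that reduction in hand, the dihedral classification together with the inequivalence and rigidity of the $C_j$'s completes the argument cleanly.
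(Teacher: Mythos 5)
Your proposal is correct and follows essentially the same route as the paper: reduce via Sher's theorem to a permutation of the $4m$ first-stage tori, use the mod-$4$ coloring by the four inequivalent rigid Cantor sets to force that permutation to be a rotation by a multiple of $4$ (the paper runs the adjacency/parity argument inductively rather than invoking $D_{4m}$ explicitly, but the content is identical), and then apply rigidity of each $C_i$ to conclude $f=h^{\ell}\vert_C$. The unsplittability claim is handled the same way, by Remark \ref{separation}.
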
 \begin{proof} Let
$\ell:S^{3}\rightarrow S^{3}$ be a homeomorphism taking $C$ to
$C$. We show that $\ell\vert_{C}=h^{k}\vert_{C}$ for some $k$,
$1\leq k \leq m$. By Sher \cite{Sh68}, we may assume that $\ell$
takes each $S_{(1,i)}$ to some $S_{(1,j)}$, and so
$\ell(C_{i})=C_{j}$. Because of the distinct rigid Cantor sets
involved, this is only possible if $j-i\equiv 0 \mod 4$.

So assume that $\ell(S_{(1,1)})=S_{(1,4k+1)}$. Then
$\ell(S_{(1,2)})$ must be one of the two tori linked with
$S_{(1,4k+1)}$, hence either $S_{(1,4k)}$ or $S_{(1,4k+2)}$. Since $(4k
-2) \not \equiv  0 \mod 4$, $\ell(S_{(1,2)})$ must be $S_{(1,4k+2)}$.
Continuing inductively, one sees that
$\ell(S_{(1,i)})=S_{(1,4k+i\mod 4m)}$. Thus
$\ell(C_{i})=C_{4k+i\mod 4m}$. But $h^{k}(C_{i})$ is also 
$C_{4k+i\mod 4m}$. Since these are rigid Cantor sets,
$\ell\vert_{C_{i}}=h^{k}\vert_{C_{i}}$ for each $i$.

So the embedding homogeneity group of $C$ is $\{h^{k}\vert 1\leq
k \leq p\}\simeq \mathbb{Z}_{m}$. By Remark \ref{separation}, $C$ is unsplittable. The assertion follows.
\end{proof}

\section{A Cantor Set with embedding homogeneity group $\mathbb{Z}$}
\label{Zsection}

We now construct a Cantor set in $S^{3}$ with embedding
homogeneity group $\mathbb{Z}$. This requires careful analysis of an infinite chain
analogue of the Antoine construction. 

\begin{const}
\label{Zconst}
Let $S_{0}$ be a pinched solid torus in $S^{3}$, i.e., the quotient of a
solid torus with a meridional disk collapsed to a single point $w$. Let
$T_{i}, i\in {\mathbb{Z}}$ be a countable collection of unknotted
pairwise disjoint solid tori in $S_{0}$ so that each $T_{i}$ is of simple linking
type with both $T_{i-1}$ and $T_{i+1}$, and  is  not linked with
$T_{j}, j\neq i-1\rm{\  or\ }i+1$. Place the tori $T_{i}$ so that 
the $T_{i}, i>0$ and the $T_{i}, i<0$ have $w$ as a limit point
as in Figure \ref{InfiniteChain}.

\begin{figure}[htb]
\begin{center}
\includegraphics[width=.75\textwidth]{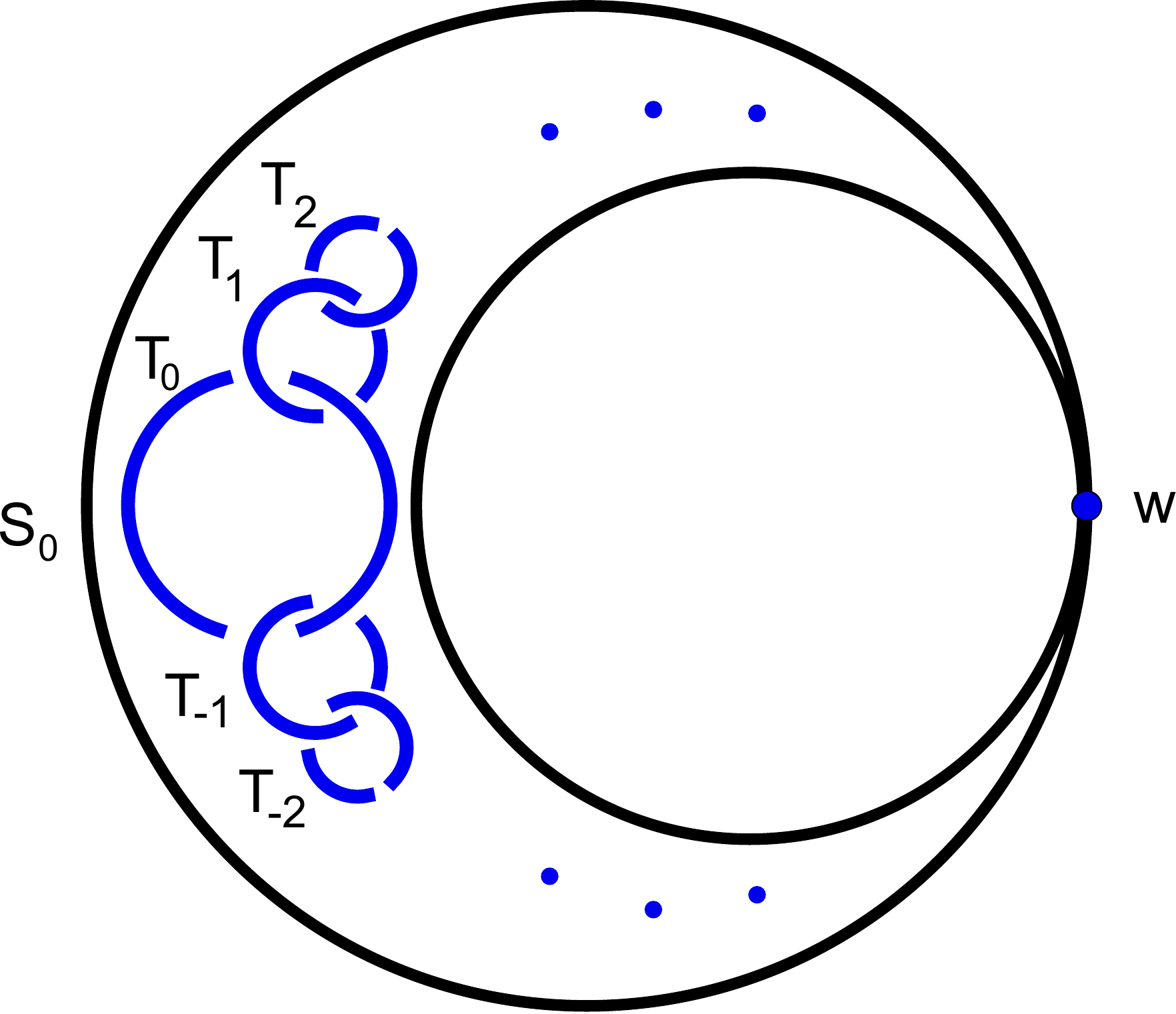}
\end{center}
\caption{Infinite Antoine Chain}
\label{InfiniteChain}
\end{figure}

Let $C_{j},1\leq j \leq3,$ be a rigid Antoine Cantor set with
first stage $T_{j}$. Choose these three rigid Antoine Cantor sets
so that they are inequivalently embedded in $S^{3}$. Let $\alpha$
be a homeomorphism of $S^{3}$, fixed on the complement of the
interior of $S_{0}$, that takes $T_{j}$ to $T_{j+3}$ for $j\in
\mathbb{Z}$.

For $3k < i \leq 3k+3$, let $C_{i}$ be the rigid Cantor set in
$T_{i}$ given by $\alpha^{k}(C_{i-3k})$. Note that this produces
a countable number of copies of each of the rigid Cantor sets
$C_{1}$, $C_{2}$, and $C_{3}$. Again, see Figure
\ref{InfiniteChain}. The Cantor set we are looking for is
\[
C=\overline{\bigcup_{i\in{\mathbb{Z}}} C_{i}}=\bigcup_{i\in{\mathbb{Z}}}
C_{i}\cup \{w\}.
\]
\end{const}
Note that $C$ is a Cantor set since it is perfect, compact, and totally disconnected.
\begin{thm} 
\label{ZTheorem}
The Cantor set $C$ from the previous construction has embedding
homogeneity group $\mathbb{Z}$ and is unsplittable.
\end{thm}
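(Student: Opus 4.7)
The plan is to establish three things: (a) $\alpha$ generates a $\mathbb{Z}$ subgroup of the embedding homogeneity group of $C$, (b) every self-homeomorphism $\ell\colon S^3\to S^3$ with $\ell(C)=C$ satisfies $\ell|_C=\alpha^k|_C$ for some $k\in\mathbb{Z}$, and (c) $C$ is unsplittable. Part (a) is immediate from Construction \ref{Zconst}: $\alpha$ is fixed outside $S_0$ and maps $T_j$ to $T_{j+3}$, so by continuity $\alpha(w)=w$ and $\alpha(C)=C$; the powers $\alpha^k|_C$ are pairwise distinct since $\alpha^k(C_1)=C_{1+3k}\neq C_1$ for $k\neq 0$, so $\langle\alpha|_C\rangle\cong\mathbb{Z}$ sits inside the embedding homogeneity group.

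The crux of (b) is to show $\ell(w)=w$, which I would establish via a local genus computation. For each $x\in C_i$, the open set $\text{Int}(T_i)$ is a neighborhood of $x$ in $S^3$ meeting $C$ only in the rigid Antoine Cantor set $C_i$, so $g_x(C)=g_x(C_i)=1$. At $w$, a small ball $B$ about $w$ together with a meridional 2-disk $D\subset B$ through $w$ (coming from the pinched structure of $S_0$) with $D\cap C=\{w\}$ splits a neighborhood of $w$ into the positive tail $X^+=\{w\}\cup\bigcup_{i>0}C_i$ and the negative tail $X^-=\{w\}\cup\bigcup_{i<0}C_i$. Each tail has $g_w(X^{\pm})=1$ (a solid torus enclosing the corresponding half-chain and $w$ realizes the upper bound), so the Slicing Theorem \ref{Slicing} yields $g_w(C)=g_w(X^+)+g_w(X^-)=2$. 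Since local genus is a homeomorphism invariant, $w$ is the unique point of $C$ with local genus $2$, and hence $\ell(w)=w$.

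With $\ell(w)=w$ in hand, I would adapt the outline of Theorem \ref{Sher} to the infinite-chain setting flagged in Section \ref{Zsection}. General position together with disk-killing of intersection curves between $\ell(\partial T_j)$ and $\partial T_k$, justified by the unsplittability of $C$ from (c) and the non-triviality of essential curves on each $\partial T_j$ in $S^3\setminus C$, isotopes $\ell$ rel $C$ to a homeomorphism that sends each $T_j$ to some $T_{\pi(j)}$, where $\pi\colon\mathbb{Z}\to\mathbb{Z}$ is a bijection. Because the chain is simply linked (Figure \ref{InfiniteChain}), $\pi$ must be an automorphism of the bi-infinite path graph on $\mathbb{Z}$, hence $\pi(j)=j+c$ or $\pi(j)=-j+c$. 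The three rigid Cantor sets $C_1,C_2,C_3$ being inequivalently embedded, with $C_j$ in the equivalence class of $C_{j\bmod 3}$, forces $\pi(j)\equiv j\pmod 3$ for all $j$; this rules out reflections (since $-j+c\equiv j\pmod 3$ cannot hold for all $j$) and forces $c=3k$ in the translation case. Rigidity of each $C_i$ then gives $\ell|_C=\alpha^k|_C$.

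Part (c) follows as in Remark \ref{separation}: a separating $2$-sphere $\Sigma\subset S^3\setminus C$ would place $w$ on one side, so by $C_j\to w$ all but finitely many $C_j$ would lie on the same side as $w$; after standard disk-killing to remove intersections of $\Sigma$ with the finitely many $\partial T_k$ involved, some linked pair $T_j,T_{j+1}$ would be separated by $\Sigma$, contradicting the impossibility of separating linked solid tori by a $2$-sphere in their complement. The principal obstacle is the Sher-type adaptation in the third paragraph above: the cut-and-paste isotopies must be arranged so as to preserve $w$ (which the local genus step guarantees) and to handle the accumulation of tori at $w$; it is precisely the combination of the local genus identification of $w$ with the unsplittability of $C$ that permits the classical disk-swapping argument to go through in the presence of a limit point.
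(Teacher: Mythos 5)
Your overall architecture matches the paper's: local genus $2$ at $w$ (via Theorem \ref{Slicing}) forces any equivalence to fix $w$; a Sher-type matching of first-stage tori combined with the three pairwise inequivalent rigid pieces forces $\ell(C_j)=C_{j+3k}$ for a fixed $k$; rigidity of each $C_j$ then gives $\ell\vert_C=\alpha^{k}\vert_C$; and unsplittability is reduced to the finite Antoine case. The local genus step, the mod $3$ bookkeeping (including the exclusion of reflections), and the unsplittability reduction are all essentially the arguments the paper gives.

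The genuine gap is exactly the step you label ``the principal obstacle'' and then do not carry out: the adaptation of Sher's argument to an infinite chain. Two things break in the plan as stated. First, one cannot put all of $\ell\bigl(\bigcup_{j}\partial T_j\bigr)$ in general position with $\bigcup_{k}\partial T_k$ by a single adjustment rel $C$, because the tori accumulate at $w$; the paper instead fixes one index $n$, chooses $N_1>|n|$ with $h(C_n)$ contained in $\bigcup_{|i|\le N_1}C_i$ and $N_2>N_1$ with $h^{-1}\bigl(\bigcup_{|i|\le N_1}C_i\bigr)\subset\bigcup_{|i|\le N_2}C_i$, and applies general position only to the finitely many boundaries of the first two construction stages of the $T_i$ with $|i|\le N_2+1$. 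Second, and more importantly, the key step of Sher's argument --- ruling out that $\ell(T_n)$ has geometric index $0$ in the torus $T_m$ containing it --- uses the fact that a finite Antoine chain closes up: contracting one link drags all the others into $T_m$, so all of $C$ would lie in one first-stage torus, a contradiction. In the open bi-infinite chain this argument never terminates; the index-$0$ nesting simply propagates along the chain. The paper's Cases Ia and Ib are precisely the replacement: propagation out to $T_{N_2+1}$ contradicts the choice of $N_1$ and $N_2$, and index $\ge 2$ is excluded by parity and multiplicativity of geometric index (Theorems \ref{evenindex} and \ref{productindex}) applied to the second-stage tori inside $T_m$. Neither the fixing of $w$ nor the unsplittability of $C$ --- the two facts you invoke --- supplies this; without the truncation and the index case analysis, the assertion that $\ell$ induces a bijection $\pi$ of the chain (the input to your path-graph and mod $3$ argument) is unsupported.
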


\begin{proof}
It is clear from the construction that each point of 
$C -  \{w\}$ has local genus $1$. 
 Theorem \ref{Slicing} applied to $w$ and the Cantor sets 
 $C_{+}=\overline{\bigcup_{i>0} C_{i}}$ and 
 $C_{-}=\overline{\bigcup_{i<0} C_{i}}$ shows that 
 $w$ has local genus $2$ in $C$. Thus any homeomorphism of 
 $S^{3}$ 
that takes $C$ to $C$ must fix $w$.

Let $h$ be such a homeomorphism of $S^{3}$ taking $C$ to $C$. Let
$T^{\,\prime}_{i}$ be the union of the linked tori in the Antoine
chain at the second stage of the construction of $C_{i}$.
Let 
\[
\Lambda_{N}=\bigcup\limits_{i=-N}^{N}T_{i},\ 
\Gamma_{N}=\bigcup\limits_{i=-N}^{N}C_{i},  \ \rm{ and\ }
 \Lambda^{\prime}_{N}=
 \bigcup\limits_{i=-N}^{N}T_{i}^{\,\prime}.
 \]
Fix an integer $n\in \mathbb{Z}$. Since $h(T_{n})$ does not contain
$w$, there is a positive  integer $N_{1}>\vert n\vert$ such that
$h(C_{n})\subset \Gamma_{N_{1}}$.
 Similarly, there is a positive integer $N_{2}>N_{1}$ such that
$h^{-1}(\Gamma_{N_{1}})\subset \Gamma_{N_{2}}$.

As in step one in the proof of Theorem \ref{Sher},
there is a homeomorphism $k$ of $S^{3}$ to itself, fixed on $C$,
 so that 
 \[
 k\left(h\left(\partial (\Lambda_{N_2+1})\cup\partial(
\Lambda^{\prime}_{N_2+1})\right)\right)\cap
 \left(
 \partial (\Lambda_{N_2+1})\cup\partial( \Lambda^{\prime}_{N_2+1})
 \right)
 =\emptyset.
 \]
Fix a point $p$ of $C_{n}$ and let $k(h(p))=h(p)=q\in C_{m}$.
We will show that $k(h(C_{n}))=h(C_{n})=C_{m}$. Let $\ell=k\circ h$. Since $\ell(T_{n})\cap T_{m}
\neq\emptyset$, and since the boundaries do not intersect, 
either $\ell(T_{n})\subset \rm{Int}(T_{m})$ or
$\rm{Int}(\ell(T_{n}))\supset T_{m}$. We consider these cases separately. 

\textbf{Case I:} 
$\ell(T_{n})\subset \rm{Int}(T_{m})$. If $\ell(T_{n})$ has
geometric index $0$ in $T_{m}$, then $\ell(T_{n})$ is contained
in a cell in $T_{m}$ and so it contracts in $T_{m}$.  Since a contraction of
$\ell(T_{n})$ meets the boundary of the linked $\ell(T_{n+1})$, and since the boundary of $\ell(T_{n+1})$ is disjoint from the boundary of $T_{m}$, $\ell(T_{n+1})
\subset\rm{Int}(T_{m})$. Continuing inductively, one finds that one of the following two situations occur when $\ell(T_{n})$ has
geometric index $0$ in $T_{m}$:

\textbf{Case Ia:} All of
$\ell(T_{n}),\ell(T_{n+1}),\ldots \ell(T_{N_{2}})$ are contained in $T_{m}$ and have geometric index $0$ there.

\textbf{Case Ib:} There is a $j, n<j\leq N_2$, with $\ell(T_{j}) 
\subset \rm{Int}(T_{m})$ and of geometric index greater than or equal to $1$ there.

In Case Ia, it follows that $\ell(T_{N_{2}+1})\subset \rm{Int}(T_{m})$. But then $C_{m}\subset \Gamma_{N_{1}}$ and $h^{-1}(C_{m})\cap C_{N_{2}+1}\neq \emptyset$, contradicting the choice of $N_2$.

In Case Ib, suppose   $\ell(T_{j})$ has geometric index $k$ greater than $1$ in $T_{m}$.
Then, by Theorem \ref{productindex}, $\ell(T_{j})$ cannot be contained in any component  of the
next stage of the construction contained in $T_{m}$ since these
have geometric index $0$ in $T_{m}$. So some component of the
next stage in $T_{m}$ is contained in $\ell(T_{j})$ and has
geometric index $0$ there by Theorem \ref{productindex}. Since the components of the next stage are linked, all components of the next stage in
$T_{m}$ are contained in $\ell(T_{j})$. The geometric index of
the union of the next stages of in $T_{m}$ in $\ell(T_{n})$ is
even by Theorem \ref{evenindex} and can not be equal to $0$. Otherwise, by Theorem \ref{productindex} the union of the next stages of $T_{m}$
would have index $0$ in $T_{m}$, which is a contradiction. So the geometric index of 
union of the next stages of in $T_{m}$ in $\ell(T_{n})$ is at least $2$. Then by Theorem
\ref{productindex}, the geometric index of union of the next stages of in $T_{m}$ in $T_{m}$ is at least $4$, contradicting the fact that this geometric index is $2$.

It follows that $\ell(T_{j})$ has geometric index $1$ in $T_{m}$
and contains the union of the next stages contained in $T_{m}$.
Since $\ell$ is a homeomorphism that takes $C$ to $C$, 
it follows from the construction of $C$ that $\ell(C_{j})=C_{m}$. Since $\ell(p)\in C_{m}$,
$\ell(T_{n})\cap \ell(C_j)\neq \emptyset$, contradicting the fact that $\ell$ is a homeomorphism.  

Thus, neither Case Ia nor Case Ib can occur. So the geometric index of $\ell(T_{n})$ in $T_{m}$ must be at least $1$. Repeating the argument from Case Ib above, with $T_{j}$ replaced by $T_{n}$, we see that $\ell(T_{n})$ has geometric index $1$ in $T_{m}$
and contains the union of the next stages contained in $T_{m}$.
Since $\ell$ is a homeomorphism that takes $C$ to $C$, 
it follows from the construction of $C$ that $\ell(C_{n})=C_{m}$ as desired.

\textbf{Case II:}  $\rm{Int}(\ell(T_{n}))\supset T_{m}$. 
Then ${\ell^{-1}}(T_{m})\subset \rm{ Int}(T_{n})$. The argument
from Case I can now be repeated replacing $\ell$ by $\ell^{-1}$
and interchanging $T_{n}$ and $T_{m}$. It follows that
$\ell^{-1}(C_{m})=C_{n}$ and so $\ell(C_{n})=C_{m}$ as desired.

Since $\ell(C_{n})=h(C_{n})=C_{m}$, it must be the case that $(m-n)\equiv 0\mod 3$.
Continuing as in the proof of the $\mathbb{Z}_{m}$ result (Theorem \ref{ZpTheorem}), we have that
for each $i$, $h(C_{i})=C_{i+(m-n)}$. Recall that for the homeomorphism $\alpha$ from the construction of $C$, it is  also the case that
$\alpha^{\frac{m-n}{3}}(C_{i})=C_{i+(m-n)}$. By the rigidity of
these Cantor sets, it follows that
$\alpha^{\frac{m-n}{3}}\vert_{C_{i}}=h\vert_{C_{i}}$. Thus the
embedding homogeneity group of $C$ is $\{ \alpha^{k} \vert k\in
\mathbb{Z}\} \simeq \mathbb{Z}$.

We now show that $C$ is unsplittable. Assume that $\Sigma$ is a $2$-sphere in $S^{3}$ that separates $C$. Choose
$\epsilon>0$ so that the distance from $\Sigma$ to $C$ is greater
than $\epsilon$. Choose $N$ so that each $T_{i}, \vert i
\vert  \geq N,$ has diameter less than $\epsilon\slash 6$ and is
within $\epsilon\slash 6$ of $w$. Since $\Sigma$ separates $C$,
$w\cup \left(\bigcup\limits_{\vert i\vert \geq N}T_{i}\right)$ must 
be in one component of $S^{3} -  \Sigma$ and there must be
points of $C$ in the other component of  $S^{3} -  \Sigma$.
So $\bigcup\limits_{\vert i \vert \leq N} T_{i}$ contains points in both components of 
 $S^{3} -  \Sigma$.
 
Form an Antoine Cantor set $C^{\prime}$ related to $C$ as
follows. Use $\bigcup\limits_{\vert i \vert \leq N} T_{i}$ as
 a part of the first stage of the construction. Complete the first
stage of the construction by adding an unknotted solid torus $T$,
linked to $T_{N}$ and $T_{-N}$ ,  that is within the $\epsilon\slash 3$-%
neighborhood of $w$. For successive stages of the Antoine Cantor 
set $C^{\prime}$ in $T_{i}, \vert i \vert \leq N$, use the
successive stages in forming the Cantor set $C_{i}\subset C$. For
successive stages of the Antoine Cantor set $C^{\prime}$ in $T$,
use any Antoine construction.

By construction and the properties of $\Sigma$, the $2$-sphere
$\Sigma$ separates the Antoine Cantor set $C^{\prime}$,
contradicting Remark \ref{separation}.
\end{proof}

\section{Main Results}
\label{mainsection}

Given a finitely generated abelian group $G$, we use the results
from the previous two sections to construct a unsplittable
Cantor set $C_{G}$ in $S^{3}$ with embedding homogeneity group
$G$.

\begin{const}
\label{MainConstruction}
Let $G\simeq \mathbb{Z}^{n}\oplus \mathbb{Z}_{m_{1}}\oplus
\mathbb{Z}_{m_{2}}\cdots \oplus \mathbb{Z}_{m_{k}}$ be any finitely
generated abelian group. Form a simple chain of $n+k$ pairwise disjoint unknotted
solid tori. Figure \ref{GeneralChain} illustrates the case
$n+k=6$. Label the tori as $T_{1},T_{2},\ldots T_{n+k}$ so that
$T_{1}$ is only linked with $T_{2}$, $T_{n+k}$ is only linked
with $T_{n+k-1}$, and so that $T_{i}$, $2\leq i \leq n+k-1$, is
linked with $T_{i-1}$ and $T_{i+1}$. 

\begin{figure}[htb]
\begin{center}
\includegraphics[width=.65\textwidth]{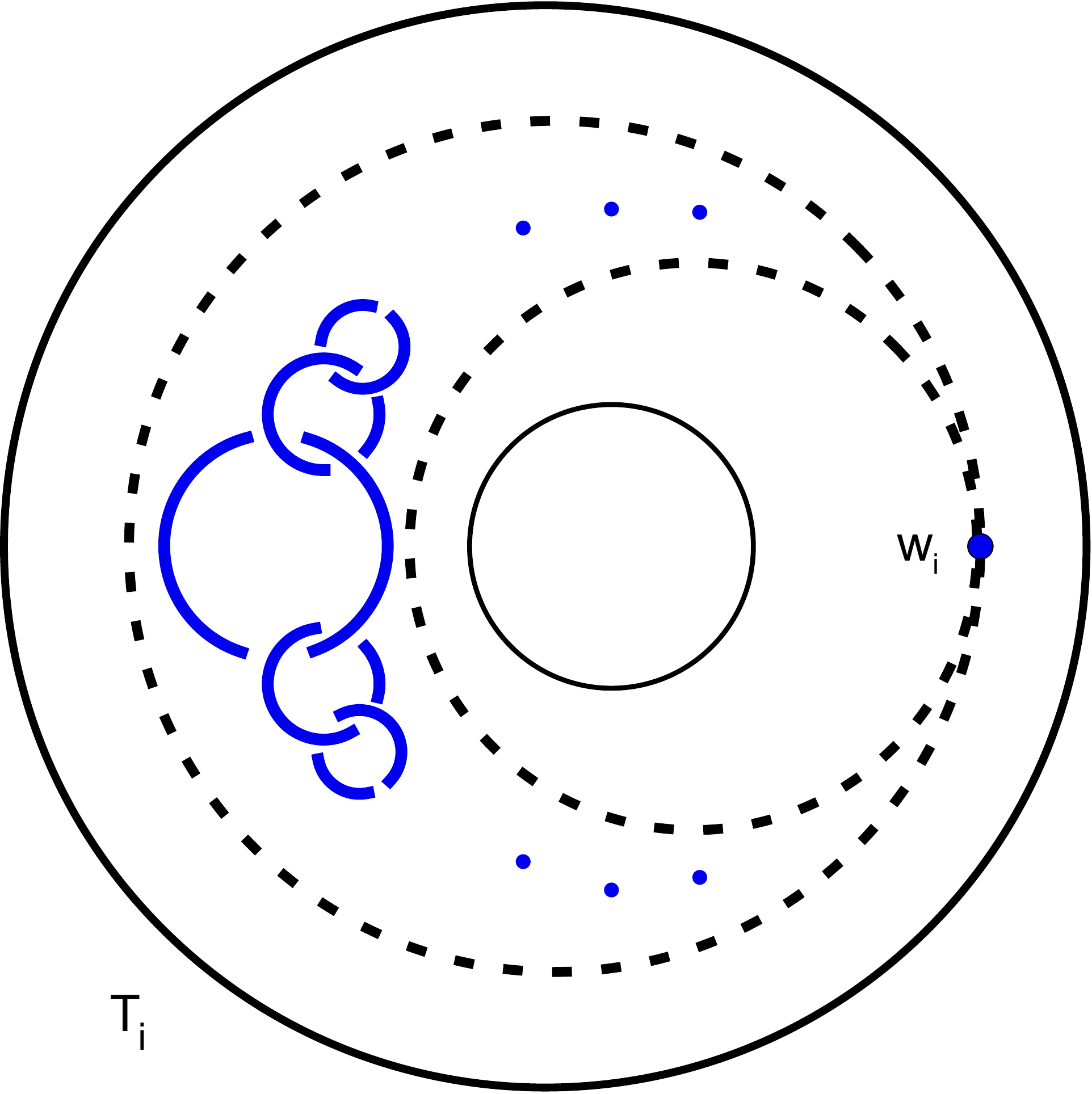}
\end{center}
\caption{Pinched Torus in  $T_{i}$}
\label{PinchedTorus}
\end{figure}

For  $1\leq i\leq n$, perform Construction \ref{Zconst} in
$T_{i}$, treating a pinched version of $T_{i}$ in the interior of
$T_{i}$ as the torus $S_{0}$ in Construction \ref{Zconst}. Let
$w_{i}$ be the limit point corresponding to $w$ in Construction
\ref{Zconst}. This yields a Cantor set $C_{i}$ in $T_{i}$ with
embedding homogeneity group $\mathbb{Z}$. See Figure \ref{PinchedTorus}.

For $n+1\leq i \leq n+k$, perform Construction \ref{ZpConst} for
the group $\mathbb{Z}_{m_{i-n}}$ in $T_{i}$. This yields a Cantor
set $C_{i}$ in $T_{i}$ with embedding homogeneity group
$\mathbb{Z}_{m_{i-n}}$. Choose all the rigid Cantor sets from
Constructions  \ref{Zconst} and \ref{ZpConst} to be inequivalent.

Let 
\[
C_{G}=\bigcup_{i=1}^{n+k}C_{i} .
\]
\begin{figure}[htb]
\begin{center}
\includegraphics[width=.75\textwidth]{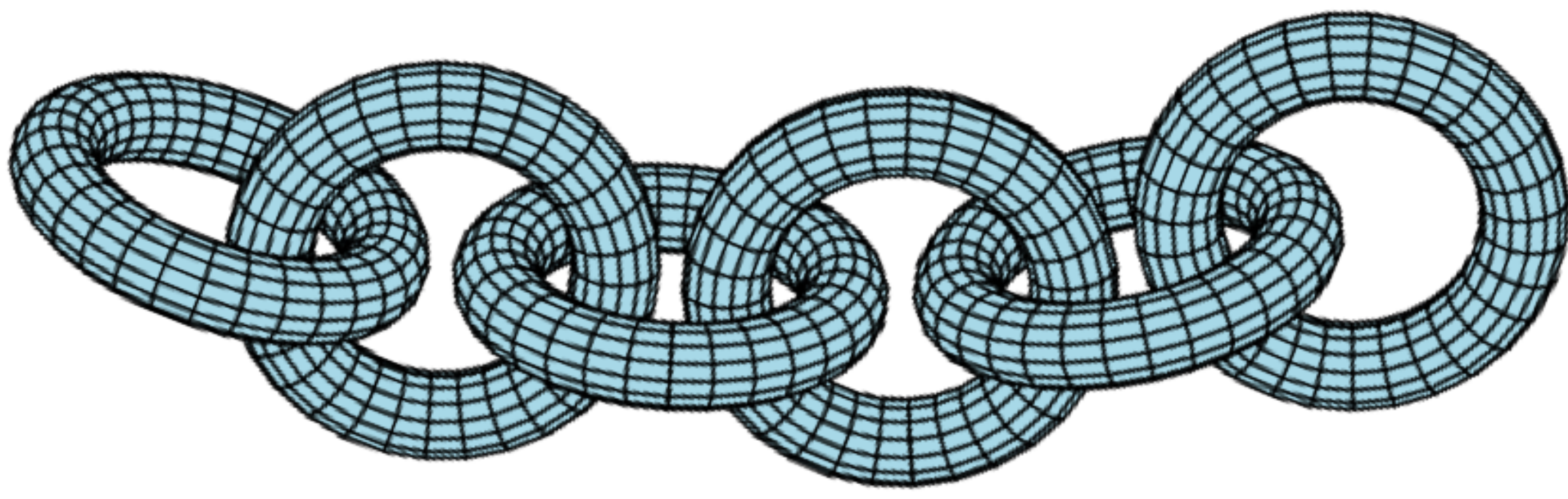}
\end{center}
\caption{Example of Antoine Chain Containing $C_{G}$}
\label{GeneralChain}
\end{figure}

\end{const}

\begin{thm} 
The Cantor set $C_{G}$ constructed above has embedding
homogeneity group $G$ and is unsplittable.
\label{maintheorem}
\end{thm}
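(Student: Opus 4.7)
The plan is to leverage the previous two theorems componentwise. First I would show that any self-homeomorphism $h$ of $S^3$ satisfying $h(C_G)=C_G$ must carry each piece $C_i$ setwise to itself. Granted this, each restriction $h\vert_{C_i}$ belongs to the embedding homogeneity group of $C_i$, which by Theorems \ref{ZTheorem} and \ref{ZpTheorem} is $\mathbb{Z}$ when $1\leq i\leq n$ and $\mathbb{Z}_{m_{i-n}}$ when $n+1\leq i\leq n+k$. Conversely, because the constructions inside distinct chain tori $T_i$ are carried out in disjoint regions of $S^3$, the generators of the individual homogeneity groups produced by Constructions \ref{Zconst} and \ref{ZpConst} can each be extended by the identity outside the corresponding $T_i$ and then composed freely, realizing every element of $G\simeq \mathbb{Z}^n\oplus \mathbb{Z}_{m_1}\oplus\cdots\oplus \mathbb{Z}_{m_k}$. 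This identifies the embedding homogeneity group of $C_G$ with $G$.

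The main obstacle is showing $h(C_i)=C_i$ for each $i$. I would begin with the general-position and curve-elimination technique from Step 1 of the proof of Theorem \ref{Sher}, applied to the first-stage collection $\Lambda=\bigcup_{i=1}^{n+k}T_i$. After composing $h$ with a homeomorphism fixed on $C_G$, we may assume $h(\partial\Lambda)\cap\partial\Lambda=\emptyset$, so for each pair $(i,j)$ either $h(T_i)\cap T_j=\emptyset$ or one of $h(T_i),T_j$ lies in the interior of the other. Now I would mimic the Case I/II geometric-index analysis from the proof of Theorem \ref{ZTheorem}: a containment $h(T_i)\subset\rm{Int}(T_j)$ of geometric index $0$ would let $h(T_i)$ contract in $T_j$, dragging every chain-linked $h(T_s)$ into $T_j$ and hence all of $h(\Lambda)$, which is impossible since $h(C_G)=C_G$ has points outside $T_j$; a geometric index of at least $2$ is ruled out by combining Theorems \ref{productindex} and \ref{evenindex} with the observation that the next stage of the construction inside each $T_j$ is a chain of geometric index exactly $2$ in $T_j$. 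Thus each $h(T_i)$ has geometric index $1$ in its host, and Theorem \ref{indexone} lets us isotope $h$ so that $h(T_i)=T_{\sigma(i)}$ for some bijection $\sigma$ of $\{1,\ldots,n+k\}$. Preservation of chain adjacencies, together with the fact that only $T_1$ and $T_{n+k}$ are chain endpoints, forces $\sigma$ to be either the identity or the order reversal. Because all rigid Antoine Cantor sets used as building blocks in Construction \ref{MainConstruction} are chosen pairwise inequivalent, the compound Cantor sets $C_i$ are themselves pairwise inequivalently embedded in $S^3$ (by Theorem \ref{Sher} applied to their Antoine substructures, together with the distinct global structure of the $\mathbb{Z}$-type versus $\mathbb{Z}_m$-type pieces). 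This eliminates the reversal and pins down $\sigma=\rm{id}$, hence the first reduction.

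For unsplittability, I would reuse the approximation scheme from the last paragraph of the proof of Theorem \ref{ZTheorem}. Suppose some $2$-sphere $\Sigma$ separates $C_G$ and let $\epsilon$ denote the distance from $\Sigma$ to $C_G$. Since each $C_i$ is unsplittable by Theorems \ref{ZTheorem} and \ref{ZpTheorem}, each $C_i$ lies in a single component of $S^3-\Sigma$, so $\Sigma$ nontrivially partitions the chain pieces $C_1,\ldots,C_{n+k}$. Replacing each pinched-torus $\mathbb{Z}$-type construction inside $T_i$ ($1\leq i\leq n$) by a sufficiently deep finite Antoine truncation lying within $\epsilon/6$ of $w_i$, I can build an honest Antoine Cantor set $C'$ whose first stage is the open chain $\bigcup T_i$ of Figure \ref{GeneralChain} and whose successive stages match those of $C_G$ close enough that $\Sigma$ still separates $C'$. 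This contradicts Remark \ref{separation}, completing the proof.
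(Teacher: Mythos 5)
Your overall architecture matches the paper's: reduce to the componentwise Theorems \ref{ZpTheorem} and \ref{ZTheorem}, realize $G$ by composing the generators supported in the pairwise disjoint tori $T_i$ (which are indeed fixed off $T_i$ by construction), and prove unsplittability by closing up the constructions into an honest Antoine Cantor set and invoking Remark \ref{separation}. The substantive difference is in how you establish $h(C_i)=C_i$. The paper first observes that $w_1,\dots,w_n$ are precisely the points of local genus $2$ in $C_G$, and then runs the Sher-type geometric-index analysis one level down, on the first-stage tori of the Antoine constructions \emph{inside} each $T_i$; at that level the relevant ``next stage'' is always a finite closed Antoine chain of geometric index exactly $2$, so Theorems \ref{productindex} and \ref{evenindex} apply as stated, and the pairwise inequivalence of the rigid building blocks then forces $i=j$. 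You instead run the index analysis on the outer chain tori $T_1,\dots,T_{n+k}$ themselves and use the open-chain endpoint combinatorics to reduce the induced permutation to the identity or the reversal, eliminating the reversal by inequivalence. That variant is reasonable, but as written it has a gap: for $1\leq j\leq n$ the ``next stage'' inside $T_j$ is the \emph{infinite} chain of Construction \ref{Zconst} limiting to $w_j$, not a finite closed chain, so your claim that it has geometric index exactly $2$ in $T_j$ is not covered by the cited results --- Theorems \ref{productindex} and \ref{evenindex} are stated for finite unions of solid tori, and any finite open sub-chain of the infinite chain has geometric index $0$ in $T_j$ (it lies in a ball); only the full chain together with the pinch point $w_j$ ``goes around'' $T_j$. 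To justify this step you would need to repeat the truncation-and-closing-up device from the proof of Theorem \ref{ZTheorem}, or simply work one level deeper as the paper does (where the genus-$2$ points $w_i$ also come in handy to constrain $h$). The remaining pieces --- the conclusion that $h\vert_{C_i}$ lies in the already-computed homogeneity group of $C_i$, the injectivity of the map from $G$, and the unsplittability argument --- are sound and agree with the paper's Steps 2 and 3.
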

\begin{proof}
For each $i, 1\leq i \leq n+k$, let $h_{i}$ be a self-%
homeomorphism of $S^{3}$, fixed on the complement of $T_{i}$,
such that $h_{i}\vert_{C_{i}}$ generates the embedding
homeomorphism group of $C_{i}$ ($\mathbb{Z}$ for $1\leq i\leq n$
and $\mathbb{Z}_{m_{i-n}}$ for $n+1\leq i \leq n+k$). Then:
\[
\left\{\left.\left(h_{1}^{j_{1}}\circ h_{2}^{j_{2}}\ldots \circ
h_{n+k}^{j_{n+k}}
\right)\right\vert_{C_{G}}\right\}
\simeq 
G\simeq \mathbb{Z}^{n}\oplus \mathbb{Z}_{m_{1}}\oplus
\mathbb{Z}_{m_{2}}\cdots \oplus \mathbb{Z}_{m_{k}}.
\]
Let $h$ be a homeomorphism of $S^{3}$ to itself taking $C_{G}$ to
$C_{G}$. We will show that $h\vert_{C_{G}}=
\left.\left(h_{1}^{j_{1}}\circ h_{2}^{j_{2}}\ldots \circ
h_{n+k}^{j_{n+k}} \right)\right\vert_{C_{G}}$ for some choice of
$j_{i}$. \\
\textbf{Step 1: }The homeomorphism $h$ must take  each
$C_{i}$ to itself. As in the proof of Theorem \ref{ZTheorem},
there are exactly $n$ points of genus $2$ in $C_{G}$, one in each
$C_{i}$, $1\leq  i \leq n$. These are the points
$\{w_{1},w_{2},\ldots w_{n}\}$. The homeomorphism must take this
set of genus $2$ points to itself.

Let $T$ be one of the solid torus components of the first stage
of the Antoine construction for some $C_{i}$, $1\leq i\leq n+k$.
As in the proofs of Theorems \ref{ZpTheorem} and \ref{ZTheorem},
after a general position adjustment, $h(T)$ must either lie in
the interior of some solid torus component $T^{\,\prime}$ of the
first stage of the Antoine construction for some $C_{j}$, or
$T^{\,\prime}$ must lie in the interior of $h(T)$. A similar
argument to that for Theorem \ref{ZTheorem} shows that
$N(h(T),T^{\prime})=1$ or $N(T^{\prime},h(T))=1$ and that
$h(C_{i}\cap T)=C_{j}\cap T^{\,\prime}$.

This same argument can be applied to all  first stage tori in
$C_{i}$, resulting in the fact that $h(C_{i})=C_{j}$. Because of
the inequivalence of the rigid Cantor sets used in the
construction, $i=j$ and $h(C_{i})=C_{i}$.

\textbf{Step 2:} For each $i$, $h\vert
_{C_{i}}=h_{i}^{k(i)}\vert_{C_{i}}$ for some $k(i)$. By Step 1, we have
that $h(C_{i})=C_{i}$. It follows from the construction that
$h\vert _{C_{i}}=h_{i}^{k(i)}$ for some $k(i)$. From this, it
follows that $h\vert_{C_{G}}= \left.\left(h_{1}^{j_{1}}\circ
h_{2}^{j_{2}}\ldots \circ h_{n+k}^{j_{n+k}}
\right)\right\vert_{C_{G}}$ for some choice of $j_{i}$.

Thus, the embedding homeomorphism group of $C_{G}$ is isomorphic
to $G$.

\textbf{Step 3:} $C_{G}$ is unsplittable. Let $\Sigma$ be a
$2$-sphere in $S^{3}$ separating $C_{G}$. As in the proof of
Theorem \ref{ZTheorem}, an Antoine Cantor set with first stage
$\bigcup\limits_{i=1}^{n+k} T_{i}$ can be formed so that $\Sigma$
separates this Antoine Cantor set. This is a contradiction. See
Remark \ref{separation}.
\end{proof}

\begin{cor}\label{maincorollary}
Let $G\simeq \mathbb{Z}^{n}\oplus \mathbb{Z}_{m_{1}}\oplus
\mathbb{Z}_{m_{2}}\cdots \oplus \mathbb{Z}_{m_{k}}$ be any finitely
generated abelian group. Then there is a irreducible open
$3$-manifold $M_{G}$ whose Freudenthal compactification  is $S^{3}$
with the following properties: 
\begin{itemize} 
\item the end set is 
homeomorphic to a Cantor set, 
\item the end homogeneity group of
$M_{G}$ is isomorphic to $G$, and

\item $M_{G}$ is genus one at infinity except for $n$ ends where
it is genus two at infinity.
\end{itemize}
\end{cor}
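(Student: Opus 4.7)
The plan is to set $M_G = S^3 \smallsetminus C_G$, where $C_G$ is the Cantor set produced by Construction \ref{MainConstruction} and analyzed in Theorem \ref{maintheorem}, and to read off each of the four claimed properties from what has already been established about $C_G$. The framework of \cite{GaRe12} supplies the machinery connecting complements of Cantor sets in $S^3$ to their Freudenthal compactifications, so the corollary is essentially a translation of Theorem \ref{maintheorem} into the language of ends.

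First I would verify that the Freudenthal compactification of $M_G$ is $S^3$ and that the end set is $C_G$. Since $C_G$ is a Cantor set (compact, totally disconnected, perfect) embedded in $S^3$, standard results on ends of manifolds give a canonical identification of the end set of $S^3 \smallsetminus C_G$ with $C_G$ itself, with the Freudenthal compactification recovering $S^3$; this is precisely the setup reviewed in the introduction and discussed in \cite{GaRe12}. Next, for irreducibility, take any 2-sphere $\Sigma \subset M_G$. By Theorem \ref{maintheorem}, $C_G$ is unsplittable, so $\Sigma$ does not separate $C_G$; hence all of $C_G$ lies in one component $U$ of $S^3 \smallsetminus \Sigma$, and the other component is an open 3-ball disjoint from $C_G$, which therefore lies in $M_G$. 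Thus $\Sigma$ bounds a 3-ball in $M_G$ and $M_G$ is irreducible.

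For the end homogeneity group, I would invoke the fact (again from \cite{GaRe12}) that every self-homeomorphism of $M_G$ extends uniquely to a self-homeomorphism of its Freudenthal compactification $S^3$, and that this extension sends the end set $C_G$ to itself. This gives a natural homomorphism from the group of self-homeomorphisms of $M_G$ to the embedding homogeneity group of $C_G$, whose image is, by definition, the end homogeneity group. Conversely, any self-homeomorphism of $S^3$ preserving $C_G$ restricts to a self-homeomorphism of $M_G$. Therefore the end homogeneity group coincides with the embedding homogeneity group of $C_G$, which is $G$ by Theorem \ref{maintheorem}.

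Finally, the genus-at-infinity statement follows directly from the local genus computations already carried out. A defining sequence for $C_G$ determines a cofinal system of neighborhoods of the corresponding ends, and the genus of the boundary surface at an end equals the local genus of the associated point of $C_G$. In Construction \ref{MainConstruction} the only points of local genus greater than $1$ are the $n$ limit points $w_1, \ldots, w_n$ coming from the $\mathbb{Z}$-factors, each of which has local genus $2$ by the argument given in the proof of Theorem \ref{ZTheorem} (via Theorem \ref{Slicing}); all other points of $C_G$ sit inside a single torus of an Antoine defining sequence and hence have local genus $1$. I expect the only genuinely delicate point to be a clean statement of the equivalence ``local genus of a point in the Cantor set = genus at infinity of the corresponding end,'' but this is a standard consequence of the defining-sequence description of both invariants and can be cited from \cite{Ze05} and \cite{GaRe12}.
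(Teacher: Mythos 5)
Your proposal is correct and follows essentially the same route as the paper's own (much terser) proof: take $M_G = S^3 \smallsetminus C_G$, identify the end set and end homogeneity group with $C_G$ and its embedding homogeneity group via the Freudenthal compactification, derive irreducibility from unsplittability, and read off the genus at infinity from the local genus computations. The extra detail you supply (the Schoenflies-type argument that a sphere missing $C_G$ bounds a ball in $M_G$, and the explicit matching of local genus with genus at infinity) is exactly what the paper leaves implicit.
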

\begin{proof}
Let $M_{G}$ be $S^{3} -  C_{G}$ where $C_{G}$ is as in
Construction \ref{MainConstruction}. Then the end set of $M_{G}$
is $C_{G}$ and the end homogeneity group of $M_{G}$ is isomorphic
to the embedding homogeneity group of $C_{G}$. $M_{G}$ is
irreducible because $C_{G}$ is unsplittable. The first two
claims now follow from Theorem \ref{maintheorem}.  The third
claim follows from the proof of this theorem.
\end{proof}

\begin{remark}
Note that for each finitely generated abelian group $G$ as above,
there are uncountably many non-homeomorphic $3$-manifolds as in
the corollary. This follows from varying the rigid Cantor sets
used in the construction.
\end{remark}

\section{Questions}
\label{questionsection}
The following questions arise from a consideration of the results
in this paper.

\begin{que} If a finitely generated abelian group is infinite, is
there an open $3$-manifold with end homogeneity group $G$ that is
genus one at infinity?
\end{que}
\begin{que}Given a finitely generated abelian group $G$, are
there simply connected open $3$-manifolds with end homogeneity
group $G$?
\end{que}
\begin{que}
Is the mapping class group of the open $3$-manifold $M_{G}$
isomorphic to $G$?
\end{que}
\begin{que}
If $G$ is a finitely generated non-Abelian group, is there an open $3$-manifold with end homogeneity
group $G$?
\end{que}
\begin{que}
If $G$ is a non-finitely generated group, is there an open $3$-manifold with end homogeneity
group $G$?
\end{que}


\section{Acknowledgments}
The authors would like to thank the referee for a number of helpful suggestions, including a clarification of the proof of Theorem \ref{ZTheorem}. The first author was supported in part by the National Science
Foundation grants DMS0852030 and DMS1005906. Both authors were
supported in part by the Slovenian Research Agency grant
BI-US/11-12-023. The second author was supported in part by the
Slovenian Research Agency grants  P1-0292-0101 and J1-2057-0101.

\bibliographystyle{amsalpha}
\def\cprime{$'$}
\providecommand{\bysame}{\leavevmode\hbox to3em{\hrulefill}\thinspace}
\providecommand{\MR}{\relax\ifhmode\unskip\space\fi MR }
\providecommand{\MRhref}[2]{%
  \href{http://www.ams.org/mathscinet-getitem?mr=#1}{#2}
}
\providecommand{\href}[2]{#2}

\end{document}